\numberwithin{equation}{section}
\theoremstyle{plain}
\newtheorem{theorem}{Theorem}[section]
\newtheorem{lemma}[theorem]{Lemma}
\theoremstyle{definition}
\newtheorem{case[theorem]}{Case}
\theoremstyle{remark}
\numberwithin{equation}{section}
\begin{document}

\title{Sum-Product Type Estimates for Subsets of Finite Valuation Rings}

\author{Esen Aksoy Yazici}

\maketitle

\begin{abstract} Let $R$ be a finite valuation ring of order $q^r.$ Using a point-plane incidence estimate in $R^3$, we obtain sum-product type estimates for subsets of $R$. In particular, we prove that for $A\subset R$, 
$$|AA+A|\gg \min\left\{q^{r}, \frac{|A|^3}{q^{2r-1}}\right\}.$$
We also show that  if $|A+A||A|^{2}>q^{3r-1}$, then
$$|A^2+A^2||A+A|\gg q^{\frac{r}{2}}|A|^{\frac{3}{2}}.$$
 \end{abstract}

\section{Introduction} 
The classical sum-product estimates in additive combinatorics is about comparing the size $|A|$ of finite subset $A$ of a ring $R$ with $|A+A|$ and $|A.A|$ and more generally with $|F(A)|$ 
 for some function $F$ of $A$. It is expected that either $|A+A|$ or $|A.A|$ has large growth compared to $|A|$, unless  $A$ is close to being a subring of $R$. A variant of this problem arises considering the similar questions for combinations of different subsets of $R$. Sum-product variant questions have been of great interest and was considered in many different context by various authors in the literature. For an extensive exploration of the subject, we refer the reader \cite{AMRS, BB, BKT, ES, MNS, NRS, P, RSS, T, TVU} and the references therein.

The growth estimates in finite field setting was recently studied by Aksoy Yazici, Murphy, Rudnev and Shkredov in \cite{AMRS}. The main tool in that paper was a point-plane incidence estimate in $FP^3$ given by Rudnev in \cite{R}, where $F=\mathbb{F}_q$ is a finite field of order $q$, and  $FP^3$ is the $3$-dimensional projective space over $F$. The method was to consider the number of  solution of an energy equation in $F^3$,  which was called as collision, as a point-plane incidence in the same space and then applying the aforementioned incidence bound.

In \cite{TV}, Thang and Vinh used a graph theoretical approach to obtain a point-line incidence bound in $R^2$, where R is a finite valuation ring of order $q^r$. They later used this estimate to improve and generalize the triangle area result given in \cite{AY}  for  $\mathbb{Z}_{q}^2$ to $R$. Here in this paper, we aim to use the graph theoretical approach similar to the Thang and Vinh's argument and obtain a point-plane incidence bound in $R^3$. We then utilize it with an analog set up in \cite{AMRS} to get the following sum-product estimates for subsets of finite valuation rings. 

\vskip.125in

\subsection {Notation.} Throughout $X\ll Y$ means that there exists an absolute constant $c$ such that $X\le cY$, and $"\gg"$ is defined in a similar way. For a ring $R$, we denote the set of units as $R^{*}$ and the set of nonunits as $R^{0}$.

Let $A$ and $B$ be nonempty subsets of $R$. The sum set is defined as 
$$A+B=\{a+b: a\in A, b\in B\} .$$ Similarly, the product set is 
$$AB=\{a.b: a\in A, b\in B\} $$
and 
$$A^n=\{a^n: a\in A\}.$$

\subsection{Statement of Main Results}

In the following two theorems, we modify sum-product results in \cite{AMRS} over finite fields  to prove their analogs over finite valuation rings.

\begin{theorem}\label{sumproduct}
Let $R$ be a finite valuation ring of order $q^r$ and $A, B, C$ be subsets of $R$. Then,
$$|BA+C|\gg \min \left\{q^{r}, \frac{|A||B||C|}{q^{2r-1}}\right\}.$$ In particular,
$$|AA+A|\gg \min\left\{q^{r}, \frac{|A|^3}{q^{2r-1}}\right\},$$ so that if $|A|>q^{r-\frac{1}{3}}$, then $|AA+A|\gg q^{r}$
\end{theorem}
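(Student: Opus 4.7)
My plan is to follow the \cite{AMRS} template adapted to the ring $R$, converting the lower bound on $|BA+C|$ into an upper bound on an additive--multiplicative energy and then recognizing that energy as a point-plane incidence count in $R^{3}$. To begin, I would introduce the representation function $r(x) = \#\{(a,b,c)\in A\times B\times C : ab+c = x\}$. Its support is $BA+C$ and $\sum_{x} r(x) = |A||B||C|$, so the Cauchy--Schwarz inequality gives
$$|BA+C| \;\geq\; \frac{(|A||B||C|)^{2}}{E}, \qquad E \;:=\; \sum_{x\in R} r(x)^{2},$$
where $E$ is precisely the number of sextuples $(a,a',b,b',c,c') \in A^{2}\times B^{2}\times C^{2}$ satisfying $ab+c = a'b'+c'$. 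The task thus reduces to an upper bound on this energy.

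Next I would interpret $E$ as an incidence count in $R^{3}$. Let $P = \{(a,b',c) : a\in A,\ b'\in B,\ c\in C\}$, and for each $(a',b,c')\in A\times B\times C$ take the affine plane
$$\pi_{a',b,c'} \;=\; \{(X,Y,Z)\in R^{3} : bX - a'Y + Z = c'\}.$$
By construction the incidence $(a,b',c)\in \pi_{a',b,c'}$ is equivalent to $ab+c = a'b'+c'$, so $E = I(P,\Pi)$. Because the $Z$-coefficient in each plane equation is the unit $1$, every $\pi_{a',b,c'}$ is a graph over the $(X,Y)$-coordinate plane, hence contains exactly $q^{2r}$ points, and distinct parameter triples produce distinct planes. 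In particular $|P| = |\Pi| = |A||B||C|$, with no degeneracy arising from the zero divisors of $R$.

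At this point I would invoke the point-plane incidence bound in $R^{3}$ established earlier in the paper via the Thang--Vinh style spectral/graph-theoretic argument. Working backwards from the target estimate, the relevant bound should take the form
$$I(P,\Pi) \;\ll\; \frac{|P||\Pi|}{q^{r}} \;+\; q^{2r-1}\sqrt{|P||\Pi|},$$
which reduces to Vinh's $\mathbb{F}_{q}^{3}$ point-plane estimate at $r=1$. Substituting $|P| = |\Pi| = |A||B||C|$ and feeding the resulting upper bound on $E$ back into the Cauchy--Schwarz inequality splits into two regimes: if the main term $|P||\Pi|/q^{r}$ dominates, we get $|BA+C|\gg q^{r}$; if the error term dominates, we get $|BA+C|\gg |A||B||C|/q^{2r-1}$. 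The minimum of the two yields the stated inequality, and the ``in particular'' statement is the case $A=B=C$.

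The main obstacle in this argument is really the $R^{3}$ incidence bound itself, which is the genuinely new input and must deal with the fact that $R$ has many zero divisors; the Cauchy--Schwarz reduction above is routine once that bound is in hand. Within this deduction, the only potential subtlety -- ensuring that our parameter-to-plane map is injective and produces honest codimension-one affine subsets of $R^{3}$, rather than degenerate or repeated ``planes'' when $a'$ or $b$ happens to be a nonunit -- is sidestepped entirely by the normalization that makes the $Z$-coefficient equal to $1$.
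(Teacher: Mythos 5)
Your proposal is correct and follows essentially the same route as the paper: the paper also bounds $|BA+C|$ by Cauchy--Schwarz against the energy of the line family $L_{B\times C}$, encodes that energy as a point-plane incidence count in $R^3$ (points $(m,b,a')$, planes $ax+y=m'z+b'$, each with a unit coefficient ensuring nondegeneracy), and applies the same spectral bound $|I(Q,\Pi)|\le \frac{1}{q^r}|Q||\Pi|+q^{2r-1}|Q|^{\frac{1}{2}}|\Pi|^{\frac{1}{2}}$. Your coordinatization of the points and planes is a cosmetic variant of the paper's, and your injectivity check matches the paper's implicit use of the unit $Z$-coefficient.
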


\begin{theorem}\label{second}Let $R$ be a finite valuation ring of order $q^r$. Let $A$ be subset of $R$. If $|A+A||A|^{2}>q^{3r-1}$, then
$$|A^2+A^2||A+A|\gg q^{\frac{r}{2}}|A|^{\frac{3}{2}}.$$
\end{theorem}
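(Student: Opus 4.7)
The plan is to adapt the scheme of proof from \cite{AMRS} for the analogous theorem in the finite field case, using the point-plane incidence estimate in $R^3$ established earlier in the paper. The strategy is to introduce an ``energy'' counting collisions of a suitable representation function, bound it from below via Cauchy--Schwarz to bring in $|A+A|$ and $|A^2+A^2|$, and bound it from above via the incidence bound; combining the two yields the claimed inequality.

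Concretely, I would consider the quantity
$$T \,:=\, \#\bigl\{(s,s',c,c',t,t') \in (A+A)^2 \times A^2 \times (A^2+A^2)^2 \,:\, sc + t = s'c' + t'\bigr\},$$
with each element of $A+A$ and $A^2+A^2$ weighted by its natural representation function ($r_{A+A}(s) = \#\{(a,a')\in A^2 : a+a' = s\}$ and analogously $r_{A^2+A^2}$). Writing $T = \sum_{x \in R} N(x)^2$ for the representation function $N(x) := \sum_{s,c,t} r_{A+A}(s)\, r_{A^2+A^2}(t)\, \mathbf{1}[sc+t = x]$, and noting that $\sum_x N(x) = |A|^5$ with $\mathrm{supp}(N) \subseteq R$, Cauchy--Schwarz produces a lower bound of the form $T \gg |A|^{10}/q^r$.

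For the upper bound, observe that $sc - s'c' = t' - t$ is linear in the triple $(c, c', t'-t) \in R^3$ once $(s,s') \in (A+A)^2$ is fixed, so $T$ equals the number of weighted incidences between the points $(c, c', t'-t)$ and the planes $\pi_{s,s'} := \{(x,y,z) : sx - s'y = z\}$ parameterized by $(s, s')$. Feeding this into the point-plane incidence estimate in $R^3$ proved earlier in the paper, the hypothesis $|A+A||A|^2 > q^{3r-1}$ is calibrated precisely so that the non-trivial (``Rudnev-type'') term of the incidence bound dominates the uniform term. Comparing the resulting upper bound with the Cauchy--Schwarz lower bound and rearranging yields $|A^2+A^2||A+A| \gg q^{r/2}|A|^{3/2}$.

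The most subtle step is handling the degeneracy of the planes $\pi_{s,s'}$ when $s$ or $s'$ lies in the set of non-units $R^0$: the incidence bound in $R^3$ typically requires that no plane contain too many collinear points, and planes whose coefficients are non-units can violate this condition. I would handle it by splitting the sum defining $T$ into a ``unit'' contribution (where $s, s' \in R^*$ and the incidence bound applies cleanly) and a ``non-unit'' contribution (bounded by hand using $|R^0| = q^{r-1}$). The threshold $|A+A||A|^2 > q^{3r-1}$ is tuned so that the unit piece dominates. A secondary technical point is tracking the weights $r_{A+A}$ and $r_{A^2+A^2}$ through the incidence count carefully enough that $|A+A|$ and $|A^2+A^2|$ appear with the correct exponents in the final estimate; this is the place where the $q^{r/2}|A|^{3/2}$ combination emerges, from balancing the $\sqrt{W_P W_\Pi}$ factor in the Rudnev-type error term against the combined point/plane weights $W_P \sim |A|^2|A^2+A^2|^2$ and $W_\Pi \sim |A+A|^2$.
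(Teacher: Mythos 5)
There is a genuine gap, and it is structural: the energy you chose degenerates. Unwinding your weights, $T$ counts solutions of $(a_1+a_2)c+b_1^2+b_2^2=(a_1'+a_2')c'+b_1'^2+b_2'^2$ with all ten variables in $A$, so the total mass is $\sum_x N(x)=|A|^5$ and your Cauchy--Schwarz lower bound over the support $R$ is $T\geq |A|^{10}/q^r$. But the main (uniform) term of any incidence-type upper bound for $T$ is $W_PW_\Pi/q^r$, and with your weights $W_P=|A|^6$, $W_\Pi=|A|^4$, i.e.\ exactly $|A|^{10}/q^r$ again: the two bounds coincide up to the error term, and combining them yields only $0\leq \mathrm{error}$ --- no information. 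More fundamentally, since every mass in your count is a pure power of $|A|$, there is no mechanism by which $|A+A|$ or $|A^2+A^2|$ can enter with the correct exponents. The paper's proof supplies exactly the two missing mechanisms: (i) the identity $a^2-b^2=2as-s^2$ with $s=a+b$, which rewrites the pure-$A$ energy $E=\#\{c^2+a^2-b^2=c'^2+a'^2-b'^2:\,a,\dots,c'\in A\}$ as a line energy $E(L,A)$ with the \emph{unweighted} line set $P=\{(2s,c^2-s^2):s\in A+A,\,c\in A\}$, so that Theorem \ref{collision} with $|L|\leq|A+A||A|$ injects $|A+A|$ into the upper bound $E\leq q^{-r}|A+A|^2|A|^4+q^{2r-1}|A+A||A|^2$; and (ii) the Pl\"unnecke--Ruzsa refinement (Lemma \ref{Plunnecke}), which makes the lower bound read $|A|^6\leq|A^2+A^2+A^2|\,E\ll \bigl(|A^2+A^2|^2/|A|\bigr)E$, injecting $|A^2+A^2|$. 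Neither ingredient appears in your scheme, and without them the conclusion cannot be extracted.

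Two secondary points are also off. First, the dominance is reversed: the hypothesis $|A+A||A|^2>q^{3r-1}$ is precisely the condition under which the \emph{uniform} term $q^{-r}|A+A|^2|A|^4$ dominates the Rudnev-type term $q^{2r-1}|A+A||A|^2$, and the theorem follows by comparing the Cauchy--Schwarz lower bound against that uniform term --- not, as you say, by making the Rudnev-type term dominate. Second, your ``most subtle step'' is moot: the incidence bound in this paper (Theorem \ref{incidence}) is the spectral bound coming from the Erd\H{o}s--Renyi graph $E_{q,4}(R)$, which holds for arbitrary sets of points and planes with no collinearity hypothesis (that condition belongs to Rudnev's theorem over fields, which is not used here); moreover the planes arising in the paper's setup have a unit coefficient by construction, so no unit/non-unit splitting is needed. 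Conversely, the weighted incidence estimate you do invoke, with error $q^{2r-1}\sqrt{W_PW_\Pi}$, is not available: the expander mixing inequality (Lemma \ref{graph}) in weighted form has error $\lambda_3\|w\|_2\|v\|_2$ with $\ell^2$ norms of the weight functions, which are governed by additive energies rather than by the total masses $W_P,W_\Pi$, so the final ``balancing'' you describe does not go through as stated.
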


\section{Proof of Theorem \ref{sumproduct}}

For the Proof of Theorem \ref{sumproduct} we will need the following two results from spectral graph theory.

\begin{lemma}\emph{\cite[Lemma 2.1]{TV}}\label{graph}
Suppose $G$ is a bipartite graph with parts $A,B$ such that the vertices in $A$
all have degree $a$ and the vertices in $B$ all have degree $b$. For any two sets $X \subset A$ and
$Y \subset B$, the number of edges between $X$ and $Y$, $e(X, Y )$, satisfies
$$\left|e(X,Y)-\frac{a}{|B|} |X||Y|\right|\le \lambda_{3}|X|^{\frac{1}{2}}|Y|^{\frac{1}{2}}$$
where $\lambda_{3}$ is the third eigenvalue of $G$.
\end{lemma}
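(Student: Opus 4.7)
The plan is to recognize this as the expander mixing lemma for bipartite biregular graphs and to prove it through a singular value decomposition of the biadjacency matrix. Let $M$ denote the $|A|\times |B|$ biadjacency matrix of $G$, with $M_{uv}=1$ precisely when $uv$ is an edge. Since every vertex of $A$ has degree $a$ and every vertex of $B$ has degree $b$, one has $M\mathbf{1}_B = a\mathbf{1}_A$ and $M^\top \mathbf{1}_A = b\mathbf{1}_B$, so that $\mathbf{1}_A/\sqrt{|A|}$ and $\mathbf{1}_B/\sqrt{|B|}$ form a Schmidt pair of $M$ with singular value $\sqrt{ab}$. This is the largest singular value of $M$, and under the convention that orders the eigenvalues of the full $(|A|+|B|)\times(|A|+|B|)$ adjacency matrix on $A\sqcup B$ in decreasing order, it accounts for the top eigenvalue $\sqrt{ab}$ and, after a sign flip, for the smallest eigenvalue $-\sqrt{ab}$. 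The quantity $\lambda_3$ appearing in the statement is then precisely the second largest singular value of $M$.

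Next I would compute the edge count $e(X,Y)$ as the bilinear form $\mathbf{1}_X^\top M\,\mathbf{1}_Y$ and decompose both indicator vectors along and orthogonal to the top Schmidt pair. Writing $\mathbf{1}_X = \tfrac{|X|}{|A|}\mathbf{1}_A + f$ and $\mathbf{1}_Y = \tfrac{|Y|}{|B|}\mathbf{1}_B + g$ with $f\perp \mathbf{1}_A$ and $g\perp \mathbf{1}_B$, Pythagoras yields $\|f\|^2\le |X|$ and $\|g\|^2\le |Y|$. Because $M\mathbf{1}_B$ is a scalar multiple of $\mathbf{1}_A$ and $M^\top\mathbf{1}_A$ is a scalar multiple of $\mathbf{1}_B$, the two cross terms in the expansion of $\mathbf{1}_X^\top M\,\mathbf{1}_Y$ vanish, and the principal term simplifies (using $a|A|=b|B|$ for the total number of edges) to $\tfrac{a}{|B|}|X||Y|$, leaving only the remainder $f^\top M\,g$.

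Finally, the variational characterization of singular values bounds $|f^\top M\,g|$ by the second singular value of $M$ times $\|f\|\,\|g\|$, which gives $|f^\top M\,g|\le \lambda_3\,|X|^{1/2}|Y|^{1/2}$, as required. The only point that needs a bit of care is the bookkeeping identifying Thang and Vinh's ``third eigenvalue'' $\lambda_3$ with the second largest singular value of $M$: one must verify that the eigenvalues $\pm\sqrt{ab}$ of the lifted adjacency matrix truly occupy the first and last slots in the ordering, so that $\lambda_3$ really is $\sigma_2(M)$ and not $\sigma_3(M)$ or $-\sqrt{ab}$. Once this convention is pinned down, the rest is standard linear algebra and the inequality follows by combining the three steps above.
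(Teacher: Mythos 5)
Your proof is correct, but there is nothing in the paper to compare it against: Lemma \ref{graph} is imported verbatim from \cite[Lemma 2.1]{TV} and used as a black box, with no proof given here. Your SVD argument is the standard proof of the bipartite expander mixing lemma that underlies the cited result, and each step checks out: $e(X,Y)=\mathbf{1}_X^\top M\,\mathbf{1}_Y$, the cross terms vanish because $M\mathbf{1}_B=a\mathbf{1}_A$ and $M^\top\mathbf{1}_A=b\mathbf{1}_B$, the main term simplifies via $a|A|=b|B|$, $\|f\|^2=|X|-|X|^2/|A|\le |X|$, and $|f^\top Mg|\le\sigma_2(M)\|f\|\,\|g\|$ since $f,g$ are orthogonal to the top Schmidt pair (and even if $\sqrt{ab}$ has multiplicity greater than one, e.g.\ for a disconnected graph, one can choose an SVD whose top pair is $\left(\mathbf{1}_A/\sqrt{|A|},\,\mathbf{1}_B/\sqrt{|B|}\right)$, and the bound with $\sigma_2$ still holds). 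The one slip is in the convention bookkeeping you yourself flag: under the strictly decreasing ordering you describe, with $\sqrt{ab}$ first and $-\sqrt{ab}$ last, the third eigenvalue of the lifted adjacency matrix would be $\sigma_3(M)$, not $\sigma_2(M)$. The convention actually in force in \cite{TV} (inherited from Vinh's earlier spectral papers) orders eigenvalues by \emph{absolute value}: since a bipartite graph has symmetric spectrum, $\lambda_1=\sqrt{ab}$ and $\lambda_2=-\sqrt{ab}$, so $\lambda_3$ is the next eigenvalue in absolute value, which is $\pm\sigma_2(M)$ --- exactly the quantity your argument controls. With that convention pinned down your proof is complete, and it matches how the lemma is consumed in this paper: the bound $\lambda_3\le q^{\frac{1}{2}(d-2)(2r-1)}$ of Theorem \ref{renyi} is precisely a bound on the second singular value of the biadjacency matrix of $E_{q,d}(R)$.
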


Erd\H{o}s-Renyi bipartite graphs $E_{q,d}(R)=(A\cup B, E)$ over finite valuation rings are defined as follows: The vertices of $E_{q,d}(R)$ are given by $[x]$, where $x\in R^d\setminus(R^{0})^d$ and $[x]$ is equivalent to $[y]$ if and only if $x=ty$ for some $t\in R^{*}$. There is an edge between $[x]$  and $[y]$ if and only if $x \cdot y=0$

\begin{theorem}\emph{\cite[Theorem 2.4]{TV}} \label{renyi}
 The cardinality of each vertex part of $E_{q,d}(R)$ is $q^{(d-1)(r-1)}$\\$\frac{(q^d-1)}{q-1}$, $deg(A)=deg(B)=q^{(d-2)(r-1)}\frac{(q^{d-1}-1)}{q-1}$, and the third eigenvalue $\lambda_{3}\leq q^{\frac{1}{2}(d-2)(2r-1)}.$
\end{theorem}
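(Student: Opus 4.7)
The statement packages three claims—the size of each vertex class, the common degree, and the third-eigenvalue bound $\lambda_{3}\leq q^{(d-2)(2r-1)/2}$—which I would address in that order.

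For the vertex count: since $R$ is a finite valuation ring of order $q^{r}$ with residue field of order $q$, its unique maximal ideal $R^{0}$ has cardinality $q^{r-1}$, so $|R^{*}|=q^{r-1}(q-1)$. Then $|R^{d}\setminus(R^{0})^{d}|=q^{rd}-q^{(r-1)d}=q^{(r-1)d}(q^{d}-1)$. Every $x$ in this set has at least one coordinate $x_{i}\in R^{*}$, which forces the scalar $R^{*}$-action to be free: if $tx=x$ and $x_{i}$ is a unit, then $t=1$. Consequently, the number of equivalence classes is
\[
\frac{q^{(r-1)d}(q^{d}-1)}{q^{r-1}(q-1)}=q^{(d-1)(r-1)}\cdot\frac{q^{d}-1}{q-1},
\]
as claimed.

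For the degree: fix a representative $x$ with, say, $x_{d}\in R^{*}$. The number of $y\in R^{d}$ with $x\cdot y=0$ is $q^{r(d-1)}$, obtained by choosing $y_{1},\dots,y_{d-1}$ freely and solving $y_{d}=-x_{d}^{-1}(x_{1}y_{1}+\cdots+x_{d-1}y_{d-1})$. Among these, the subset with $y\in(R^{0})^{d}$ has exactly $q^{(r-1)(d-1)}$ elements, since once $y_{1},\dots,y_{d-1}\in R^{0}$ are chosen the formula forces $y_{d}\in R^{0}$ automatically. Hence the valid count is $q^{(r-1)(d-1)}(q^{d-1}-1)$, and dividing by $|R^{*}|$ yields the stated degree. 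Symmetry of the incidence relation $x\cdot y=0$ then gives biregularity.

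For the eigenvalue (the main obstacle): I would realize the bipartite adjacency matrix as $\begin{pmatrix}0&M\\M^{T}&0\end{pmatrix}$, whose nonzero eigenvalues come in pairs $\pm\sigma$ for the singular values $\sigma$ of $M$, and then study $MM^{T}$. Its $(u,v)$ entry counts common neighbors of $[x_{u}]$ and $[x_{v}]$, i.e.\ the number of $[z]$ satisfying $\langle x_{u},z\rangle=\langle x_{v},z\rangle=0$ over the local ring $R$. I would encode the indicator of each equation via the additive characters of $R$,
\[
\mathbf{1}_{\langle x,z\rangle=0}=q^{-r}\sum_{\psi}\psi(\langle x,z\rangle),
\]
and reduce the common-neighbor estimate to exponential-sum bounds of Gauss/Kloosterman type over $R$. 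Square-root cancellation for those sums, combined with bookkeeping of ``degenerate'' pairs where $x_{u}$ and $x_{v}$ lie in a common proper submodule, should show that $MM^{T}$ minus its rank-one main term has operator norm at most $q^{(d-2)(2r-1)}$; taking a square root gives the claimed bound on $\lambda_{3}$. The hard part is the layered ideal structure of $R$: linear dependence is not a clean notion over a non-field local ring, and the degenerate locus has to be stratified according to the valuations of the $2\times 2$ minors of $\begin{pmatrix}x_{u}\\ x_{v}\end{pmatrix}$, which complicates both the character manipulation and the counting of exceptional pairs.
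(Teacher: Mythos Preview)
The paper does not prove this theorem; it is quoted from \cite[Theorem~2.4]{TV} and used as a black box in the subsequent point--plane incidence bound. There is no in-paper argument to compare your proposal against.

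That said: your vertex-count and degree computations are correct and self-contained. The eigenvalue portion, however, is a strategy rather than a proof. You set up the analysis of $MM^{T}$ via additive characters and then appeal to ``square-root cancellation'' for the resulting sums, but you neither carry out the character computation nor resolve the stratification of degenerate pairs that you yourself flag as the hard part. As written this is a genuine gap---a plan, not a proof. To close it you would have to either execute the exponential-sum estimate in full over the local ring $R$ (including the contribution from pairs $([x_{u}],[x_{v}])$ whose $2\times 2$ minors lie in the maximal ideal, which is where the exponent $(d-2)(2r-1)$ actually emerges), or consult \cite{TV} and reproduce their argument.
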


In the proof of Theorem \ref{sumproduct} we will use the following incidence theorem between points and planes in $R^3$, which is analogous to the point-line incidence bound in $R^2$ given in \cite[Theorem 4.2]{TV}.

\begin{theorem}\label{incidence}
Let $R$ be a finite valuation ring of order $q^r$.  Let $Q$ be a set of points in $R^3$ and $\Pi$ be  a set of planes in $R^3$. Then the number of incidences  $\left|I(Q, \Pi)\right|$ between $Q$ and $\Pi$ satisfies 
\begin{eqnarray}\label{pq}
\left|\left|I(Q,\Pi)\right|-\frac{1}{q^{r-1}}\frac{(q^{2}+q+1)}{(q^3+q^2+q+1)}|Q||\Pi|\right|\le q^{2r-1}|Q|^{\frac{1}{2}}|\Pi|^{\frac{1}{2}} 
\end{eqnarray}
Hence
$$\left|I(Q,\Pi)\right|\le \frac{1}{q^r}|Q||\Pi|+ q^{2r-1}|Q|^{\frac{1}{2}}|\Pi|^{\frac{1}{2}} .$$
\end{theorem}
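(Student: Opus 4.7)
The plan is to realize the point-plane incidence structure on $R^3$ as a subgraph of the Erd\H{o}s--R\'enyi bipartite graph $E_{q,d}(R)$ in the case $d=4$, and then read off the bound from Lemma \ref{graph} using the spectral data supplied by Theorem \ref{renyi}. To do this I embed a point $(x_1,x_2,x_3)\in R^3$ as the vertex $[(x_1,x_2,x_3,1)]$ on one side of $E_{q,4}(R)$, and a plane with equation $a_1 y_1+a_2 y_2+a_3 y_3+a_4=0$ (where some $a_i$ is a unit) as the vertex $[(a_1,a_2,a_3,a_4)]$ on the other side. Because the last coordinate of the point representative is a unit, and because a plane determines its coefficient vector up to multiplication by a unit, both maps are injective. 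Moreover an edge in $E_{q,4}(R)$ is precisely the relation $(a_1,a_2,a_3,a_4)\cdot(x_1,x_2,x_3,1)=0$, so point-plane incidences are in bijection with edges between the images of $Q$ and $\Pi$.

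Next, I plug $d=4$ into Theorem \ref{renyi}: each vertex class has size $|B|=q^{3(r-1)}(q^3+q^2+q+1)$, the common degree is $a=q^{2(r-1)}(q^2+q+1)$, and the third eigenvalue satisfies $\lambda_3\le q^{2r-1}$. Applying Lemma \ref{graph} to $X=Q$ and $Y=\Pi$ (viewed inside the graph) yields
\begin{equation*}
\left||I(Q,\Pi)|-\frac{a}{|B|}|Q||\Pi|\right|\le q^{2r-1}|Q|^{1/2}|\Pi|^{1/2}.
\end{equation*}
A direct computation gives $\dfrac{a}{|B|}=\dfrac{1}{q^{r-1}}\cdot\dfrac{q^2+q+1}{q^3+q^2+q+1}$, which is exactly the main-term coefficient in the statement.

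For the simplified second inequality, I use that $q(q^2+q+1)=q^3+q^2+q\le q^3+q^2+q+1$, hence $\dfrac{q^2+q+1}{q^3+q^2+q+1}\le \dfrac{1}{q}$, so $\dfrac{a}{|B|}\le q^{-r}$; combining with the error term gives $|I(Q,\Pi)|\le q^{-r}|Q||\Pi|+q^{2r-1}|Q|^{1/2}|\Pi|^{1/2}$.

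The step I would scrutinise most carefully is the embedding bookkeeping: one must be sure that distinct points map to distinct vertices (which is automatic since the fourth coordinate is pinned to $1$) and that distinct planes in $R^3$ map to distinct unit-equivalence classes (which needs the convention that a plane is presented by a coefficient vector with at least one unit entry). Once these identifications are verified so that $|X|=|Q|$ and $|Y|=|\Pi|$, the remainder of the proof is a mechanical substitution into Lemma \ref{graph}, and the technical heart of the result is entirely contained in the spectral bound of Theorem \ref{renyi}.
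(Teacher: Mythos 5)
Your proof is correct and follows essentially the same route as the paper: identifying points $(x_1,x_2,x_3)$ with vertices $[x_1,x_2,x_3,1]$ and planes with their coefficient classes in $E_{q,4}(R)$, then substituting the $d=4$ spectral data from Theorem \ref{renyi} into Lemma \ref{graph}, with the same computation $\frac{a}{|B|}=\frac{1}{q^{r-1}}\cdot\frac{q^2+q+1}{q^3+q^2+q+1}\le q^{-r}$. Your extra attention to the injectivity of the two embeddings is a point the paper passes over silently, but it introduces no divergence in the argument.
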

\begin{proof}
We can identify the point $(x_1,x_2,x_3)\in Q$ with $[x_1,x_2,x_3,1]\in E_{q,4}(R)$, and the plane $ax+by+cz=d$ in $\Pi$, $(a,b,c,d)\in R^4\setminus (R^{0})^4$, with $[a,b,c,-d]\in E_{q,4}(R)$. Therefore we will identify $Q$ with $Q'$ and $\Pi$ with $ \Pi'$, where $Q'=\{[x_1,x_2,x_3,1]:\;(x_1,x_2,x_3)\in Q\}$ and $\Pi'=\{[a,b,c,-d]:\; ax+by+cz=d\; \text{in}\; \Pi \}$. Note that $|Q|=|Q'|$ and $|\Pi|=|\Pi'|$.

Then it clearly follows that the number of incidences between $Q$ and $\Pi$ is equal to the number of edges between $Q'$ and $\Pi'$ in the Erd\H{o}s-Renyi graph $E_{q,4}(R)$. By Theorem \ref{renyi}, the cardinality of each vertex part of $E_{q,4}(R)$ is $q^{3(r-1)}(q^3+q^2+q+1)$, $deg(A)=deg(B)=q^{2(r-1)}(q^2+q+1)$ and the third eigenvalue $\lambda_3$ of $E_{q,4}$ is at most $q^{2r-1}$. From Lemma \ref{graph}, (\ref{pq}) follows which implies that
\begin{eqnarray}
\left|I(Q,\Pi)\right|\le \frac{1}{q^r}|Q||\Pi|+ q^{2r-1}|Q|^{\frac{1}{2}}|\Pi|^{\frac{1}{2}}
\end{eqnarray}
and completes the proof.
\end{proof}

For the proof of Theorem \ref{sumproduct}, we use a similar set up given in \cite{AMRS} for the sum-product type estimates over finite fields.  For completeness, we recall the notation and give the proof accordingly.

Let $P\subset R^2\setminus \{(0,0)\}.$ Define the set of lines 
$$L=L_P=\{l_{m,b}:(m,b)\in P\}$$
and
$$L(A)=L_P(A)=\{l_{m,b}(a)=ma+b:(m,b)\in P, a\in A\}$$
\begin{eqnarray}\label{energy}
E(L,A)=|\{(l,l',a,a')\in L^2\times A^2: l(a)=l'(a')\}|
\end{eqnarray}
For $L=L_P$, (\ref{energy}) becomes
$$E(L,A)=|\{ma+b=m'a'+b': (m,b), (m',b')\in P, a,a'\in A  \}| $$

We have the following result.

\begin{theorem}\label{collision}
Let $P\subset R^2$ and $A\in R$. Let $L=L_P$. Then 
\begin{eqnarray}
E(L,A)\leq \frac{1}{q^{r}}|L|^2|A|^2+q^{2r-1}|L||A|
\end{eqnarray}
and
\begin{eqnarray}
|L(A)|\gg\min\left\{q^{r}, \frac{|L||A|}{q^{2r-1}}\right\}
\end{eqnarray}
\end{theorem}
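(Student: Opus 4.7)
My plan is to reinterpret $E(L,A)$ as a point--plane incidence count in $R^3$, apply the incidence bound of Theorem~\ref{incidence}, and then pass from an upper bound on the energy to a lower bound on $|L(A)|$ via Cauchy--Schwarz.

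The key algebraic manoeuvre is to rewrite the defining equation $ma+b = m'a'+b'$ in the form
$$m\cdot a + (-a')\cdot m' + (-1)\cdot b' = -b,$$
which exhibits the point $(a,m',b')\in R^3$ as lying on the affine plane $mx_1 - a'x_2 - x_3 = -b$. I would therefore set
$$Q = \{(a,m',b') : a\in A,\ (m',b')\in P\}, \qquad |Q| = |L||A|,$$
and let $\Pi$ be the set of affine planes of the above form indexed by $(m,b,a')\in P\times A$, so $|\Pi| = |L||A|$. Under the identifications used in the proof of Theorem~\ref{incidence}, the points map to $[a,m',b',1]\in E_{q,4}(R)$ and the planes to $[m,-a',-1,-b]$; both are honest vertices because the homogeneous entries $1$ and $-1$ are units in $R$, and both parametrizations are injective because these unit coordinates are fixed. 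By construction $E(L,A) = |I(Q,\Pi)|$, so Theorem~\ref{incidence} yields directly
$$E(L,A) \le \frac{|L|^2|A|^2}{q^r} + q^{2r-1}|L||A|,$$
which is the first inequality.

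For the second inequality I would apply Cauchy--Schwarz to the representation function $r(v) = |\{(l,a)\in L\times A : l(a)=v\}|$, which satisfies $\sum_v r(v) = |L||A|$ and $\sum_v r(v)^2 = E(L,A)$. This gives
$$|L|^2|A|^2 \le |L(A)|\cdot E(L,A),$$
and splitting into cases according to which term of the energy bound dominates (the two terms are comparable precisely when $|L||A|\sim q^{3r-1}$) produces
$$|L(A)| \gg \min\!\left\{q^r,\ \frac{|L||A|}{q^{2r-1}}\right\}.$$

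The main obstacle I foresee is purely bookkeeping: one must ensure the point--plane identification really is injective (so Theorem~\ref{incidence} applies to genuine sets rather than multisets) and that it lands in the non-degenerate locus $R^4\setminus (R^0)^4$ required for the Erd\H{o}s--Renyi graph $E_{q,4}(R)$. Both are handled automatically by the unit entries $1$ and $-1$ in the final point coordinate and the third plane coordinate respectively, so the argument reduces to invoking Theorem~\ref{incidence} and the standard energy/Cauchy--Schwarz dichotomy.
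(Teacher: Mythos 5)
Your proposal is correct and follows essentially the same route as the paper: recast $E(L,A)$ as a point--plane incidence count in $R^3$, apply Theorem~\ref{incidence}, and convert the energy bound into the lower bound on $|L(A)|$ via Cauchy--Schwarz on the representation function. The only cosmetic difference is that you swap the roles of points and planes (the paper takes $Q=\{(m,b,a')\}$ and planes $ax+y=m'z+b'$, while you take points $(a,m',b')$ and planes $mx_1-a'x_2-x_3=-b$), and your injectivity/non-degeneracy bookkeeping via the unit coordinates $1$ and $-1$ is a valid justification of a step the paper leaves implicit.
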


\begin{proof}
Define
$$Q=\{(m,b,a'):(m,b)\in P,a'\in A\}$$
$$\Pi=\{\pi:ax+y=m'z+b': (m',b')\in P, a\in A\},$$
and note that $|Q|=|\Pi|=|L||A|$, and also $E(L,A)=|I(Q,\Pi)|$.
We therefore by Theorem \ref{incidence} have 
\begin{eqnarray*}
E(L,A)&=&|I(Q,\Pi)|\\
&\leq&\frac{1}{q^{r}}|Q||\Pi|+q^{2r-1}|Q|^{\frac{1}{2}}|\Pi|^{\frac{1}{2}}\\
&=&\frac{1}{q^{r}}|L|^2|A|^2+q^{2r-1}|L||A|
\end{eqnarray*}
which proves the first part of the theorem.
If we denote 
$$r_{L(A)}(y)=|\{((m,b),a)\in P\times A : y=ma+b\}|,$$
then by the Cauchy-Schwarz inequality
\begin{eqnarray*}
|L|^2|A|^2&=&\left(\sum_{y}  r_{L(A)}(y) \right)^2\\
&\leq&|L(A)|\sum_{y}(r_{L(A)}(y))^2\\
&=&|L(A)|E(L,A)\\
&\ll&|L(A)|\left(\frac{1}{q^{r}}|L|^2|A|^2+q^{2r-1}|L||A|\right),
\end{eqnarray*}
therefore
$$|L(A)|\gg\min\left\{q^{r},\frac{|L||A|}{q^{2r-1}}\right\}.$$
\end{proof}

\begin{proof}[Proof of Theorem \ref{sumproduct}]
Note that $BA+C=L_{P}(A)$ where $P=B\times C$ so that $ L_P=L_{B\times C}$. Theorem \ref{collision} implies that
\begin{eqnarray}\label{abc}
|BA+C|=|L_{P}(A)|\gg\min\left\{q^{r},\frac{|A||B||C|}{q^{2r-1}}\right\}.
\end{eqnarray}
If we take $A=B=C$ in (\ref{abc}), we simply get
$$ |AA+A|\gg \min\left\{q^{r}, \frac{|A|^3}{q^{2r-1}}\right\}$$
and when $q^r<\frac{|A|^3}{q^{2r-1}}$, i.e.,  $|A|>q^{r-\frac{1}{3}}$, we have $|AA+A| \gg q^r$ which proves the claim.
\end{proof}

\section{Proof of Theorem \ref{second}}
For the proof of Theorem \ref{second} we will use the following Pl\"unnecke-Ruzsa inequality.
\begin{lemma}\label{Plunnecke}
 Let $A$ and $B$ be finite subsets of an abelian group such that $|A+B|\leq K|A|$. Then for an arbitrary $0<\delta<1$ there is a nonempty set $X\subset A$ such that $|X|\geq(1-\delta)|A|$ and for any integer k, one has 
\begin{eqnarray}
|X+kB|<\left(\frac{K}{\delta}\right)^k|X|.
\end{eqnarray}
\end{lemma}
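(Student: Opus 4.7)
My plan is to derive this ``large subset'' version of the Plünnecke--Ruzsa inequality by iterating the standard Plünnecke--Ruzsa bound on nested \emph{unused} portions of $A$. The central black-box tool is Petridis's sharp form of Plünnecke's inequality: if $Y$ is a nonempty subset of an ambient finite set $Z$ that minimizes the ratio $|Y+B|/|Y|$ among nonempty subsets of $Z$, with minimum value $K_Y$, then $|Y+kB| \leq K_Y^{k}|Y|$ for every positive integer $k$. The proof of this tool is a short induction on $k$ from the set-theoretic inequality $|Y+B+C| \leq K_Y|Y+C|$ for arbitrary finite $C$; I would invoke it rather than reprove it.

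First I would set $A_1 := A$ and inductively construct disjoint nonempty sets $X_1, X_2, \ldots \subset A$ as follows. Given $X_1, \ldots, X_{i-1}$, let $A_i := A \setminus (X_1 \cup \cdots \cup X_{i-1})$ and stop if $|X_1 \cup \cdots \cup X_{i-1}| \geq (1-\delta)|A|$. Otherwise $|A_i| > \delta|A|$ \emph{strictly}, and since $|A_i + B| \leq |A+B| \leq K|A|$, the ratio satisfies $|A_i+B|/|A_i| < K/\delta$. Applying Petridis's lemma inside $A_i$ furnishes a nonempty $X_i \subset A_i$ whose Plünnecke ratio $K_i$ is at most $|A_i+B|/|A_i| < K/\delta$, so $|X_i + kB| \leq K_i^{k}|X_i| < (K/\delta)^{k}|X_i|$ for every positive integer $k$. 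The process terminates since each $X_i$ is nonempty and $A$ is finite; let $m$ be the first index with $|X_1 \cup \cdots \cup X_m| \geq (1-\delta)|A|$, and set $X := X_1 \cup \cdots \cup X_m$.

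A union bound then closes the argument:
$$|X + kB| \;\leq\; \sum_{i=1}^{m} |X_i + kB| \;<\; (K/\delta)^{k} \sum_{i=1}^{m} |X_i| \;=\; (K/\delta)^{k}|X|,$$
while $|X| \geq (1-\delta)|A|$ by the choice of $m$. Note that the \emph{strict} inequality in the conclusion is inherited from the strict gap $|A_i| > \delta|A|$ that persists throughout the peeling. The only conceptually nontrivial ingredient in the argument is the Petridis lemma black-box, whose inductive step on $C$ relies on a careful set-theoretic decomposition of $Y+B+C$ into a piece contained in $Y+C$ and a controlled complement; given that input, the greedy peeling step above is essentially a bookkeeping exercise, so I expect no substantial obstacle beyond marshalling Petridis's lemma cleanly.
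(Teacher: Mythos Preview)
The paper does not actually prove this lemma; it is quoted as a known strengthening of the Pl\"unnecke--Ruzsa inequality and invoked as a black box in the proof of Theorem~\ref{second}. So there is no ``paper's own proof'' to compare against.

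Your argument is correct and is, in fact, the standard modern route to this statement: iteratively peel off Petridis minimizing subsets $X_i$ from the remaining part $A_i$ of $A$, observing that while $|X_1\cup\cdots\cup X_{i-1}|<(1-\delta)|A|$ one has $|A_i|>\delta|A|$ strictly, whence the minimal ratio $K_i\le |A_i+B|/|A_i|<K/\delta$; Petridis then gives $|X_i+kB|\le K_i^{k}|X_i|<(K/\delta)^{k}|X_i|$, and summing over the disjoint $X_i$ yields the claim for $X=X_1\cup\cdots\cup X_m$. Your bookkeeping for the strict inequality is sound. The only quibble concerns the \emph{statement} as printed in the paper rather than your proof: ``for any integer $k$'' must be read as ``for any positive integer $k$'', since at $k=0$ the strict inequality $|X|<|X|$ is false. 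Your proof already implicitly restricts to $k\ge 1$, which is all that is used downstream.
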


\begin{proof}[Proof of Theorem \ref{second}]
Let $A^2+A^2=S$ and $|S|=K|A|$. we can use Lemma \ref{Plunnecke} to refine $A$ to a large subset $A'$ of $A$, and then for $k=2$ we will get
$$|{A'}^2+{A'}^2+{A'}^2|\ll K^{2}|A|.$$
Since working with $A'$ instead of $A$ will not change the calculations in the end, we replace $A$ with $A'$ using the same notations as $A$ and $S$.
Let
\begin{eqnarray*}
E&=&|\{c^2+a^2+{b'}^2={c'^2+{a'}^2+b^2: a,b,c,a',b',c'\in A}\}|\\
&=&|\{c^2+a^2-b^2={c'^2+{a'}^2-{b'}^2: a,b,c,a',b',c'\in A}\}|\\
&=&|\{c^2+2as-s^2={c'}^2+2a's'-{s'}^2: s=a+b, s'=a'+b'\}|
\end{eqnarray*}
Hence $E=E(L,A)$ where $L=L_P$
$$P=\{(2s,c^2-s^2): s\in A+A, c\in A\}.$$
We can see $E(L,A)$ as a point-plane incidence in $R^3$ and Theorem \ref{collision} with $|L|\leq |A+A||A|$ gives
\begin{eqnarray*}
E=E(L,A)&\leq& \frac{1}{q^r}|A+A|^2|A|^2|A|^2+q^{2r-1}|A+A||A||A|\\
&=& \frac{1}{q^r}|A+A|^2|A|^4+q^{2r-1}|A+A||A|^2
\end{eqnarray*}
From the Cauchy-Schwarz inequality, it follows that
\begin{eqnarray}\label{last}
|A|^{6}&\leq& |A^2+A^2+A^2|E\nonumber\\
&\ll&K^2|A|E\nonumber\\
&\leq&\frac{|A^{2}+A^{2}|^{2}}{|A|}\left\{\frac{1}{q^r}|A+A|^{2}|A|^{4}+q^{2r-1}|A+A||A|^2 \right\}
\end{eqnarray}
Note that when $|A+A||A|^{2}>q^{3r-1}$, the the first term in RHS of (\ref{last}) dominates and we obtain 
$$|A^2+A^2||A+A|\gg q^{\frac{r}{2}}|A|^{\frac{3}{2}}.$$

\end{proof}

\vskip.125in
\noindent \textbf{Acknowledgments.} The author  would like to thank Alex Iosevich and Jonathan Pakianathan for their valuable comments.
\vskip.125in

 \end{document}